\def\qed{\hfill\ifhmode\unskip\nobreak\fi\quad\ifmmode\Box\else\hfill$\Box$\fi}
\def\ite#1{\hfill\break${}$\hbox to 50pt {\quad(#1)\hfill}}
\def\cA{{\mathcal A}}
\def\cB{{\mathcal B}}
\def\cF{{\mathcal F}}
\def\cH{{\mathcal H}}
\def\SDRP{{\rm SDRP }}
\newtheorem{thm}{Theorem}[section]
\newtheorem{cor}[thm]{Corollary}
\newtheorem{definition}[thm]{Definition}
\newtheorem{lem}[thm]{Lemma}
\newtheorem{conj}[thm]{Conjecture}
\def\ex{{\rm{ex}}}
\begin{document}

\pagestyle{myheadings} 
\markright{{\small{\sc Z.~F\"uredi, A.~Kostochka, and Ruth Luo:   Avoiding long Berge cycles}}}

\title{\vspace{-0.5in} Avoiding long Berge cycles}

\author{
{{Zolt\'an F\" uredi}}\thanks{
\footnotesize {Alfr\' ed R\' enyi Institute of Mathematics, Hungary.
E-mail:  \texttt{z-furedi@illinois.edu}. 
Research supported in part by the Hungarian National Research, Development and Innovation Office NKFIH grant K116769, and  by the Simons Foundation Collaboration Grant 317487.
}}
\and
{{Alexandr Kostochka}}\thanks{
\footnotesize {University of Illinois at Urbana--Champaign, Urbana, IL 61801
 and Sobolev Institute of Mathematics, Novosibirsk 630090, Russia. E-mail: \texttt {kostochk@math.uiuc.edu}.
 Research 
is supported in part by NSF grant  DMS-1600592
and grants 18-01-00353A and 16-01-00499  of the Russian Foundation for Basic Research.
}}
\and{{Ruth Luo}}\thanks{University of Illinois at Urbana--Champaign, Urbana, IL 61801, USA. E-mail: {\tt ruthluo2@illinois.edu}.}}

\date{\today}

\maketitle

\vspace{-0.3in}

\begin{abstract}
Let $n\geq k\geq r+3$ and $\cH$ be an $n$-vertex $r$-uniform hypergraph.
We show that if
   \[|\cH|> \frac{n-1}{k-2}\binom{k-1}{r}\]
then $\cH$ contains a Berge cycle of length at least $k$.
This bound is tight when $k-2$ divides $n-1$. We also show that the bound is attained only 
for connected $r$-uniform hypergraphs in which every  block is the complete hypergraph $K^{(r)}_{k-1}$.

We conjecture that our bound also holds in the case $k=r+2$, but the case of short cycles, $k\leq r+1$,
  is different.

\medskip\noindent
{\bf{Mathematics Subject Classification:}} 05D05, 05C65, 05C38, 05C35.\\
{\bf{Keywords:}} Berge cycles, extremal hypergraph theory.
\end{abstract}

\section{ \bf  Definitions, Berge $F$ subhypergraphs}

An $r$-uniform hypergraph, or simply {\em $r$-graph}, is a family of $r$-element subsets of a finite set.
We associate an $r$-graph $\cH$ with its edge set and call its vertex set $V(\cH)$.
Usually we take $V(\cH)=[n]$, where $[n]$ is the set of first $n$ integers, $[n]:=\{ 1, 2, 3,\dots, n\}$.
We also use the notation $\cH\subseteq \binom{[n]}{r}$.

\begin{definition}[Anstee and Salazar~\cite{AS}, Gerbner and Palmer~\cite{GP}] For a graph $F$ with vertex set $\{v_1, \ldots, v_p\}$ and edge set $\{e_1, \ldots, e_q\}$, a hypergraph $\mathcal H$ contains a {\bf Berge $F$} if there exist distinct vertices $\{w_1, \ldots, w_p\} \subseteq V(\mathcal H)$ and  edges $\{f_1, \ldots, f_q\} \subseteq E(\mathcal H)$, such that if $e_i = v_{i_1} v_{i_2}$, then $\{w_{i_1}, w_{i_2}\} \subseteq f_i$. The vertices $\{w_1, \ldots, w_p\}$ are called the {\bf base vertices} of the Berge $F$.  
\end{definition}

Of particular interest to us are Berge cycles. 

\begin{definition} A {\bf Berge cycle} of length $\ell$ in a hypergraph is a set of $\ell$ distinct vertices $\{v_1, \ldots, v_\ell\}$ and $\ell$ distinct edges $\{e_1, \ldots, e_\ell\}$ such that $\{ v_{i}, v_{i+1} \}\subseteq   e_i$ with indices taken modulo $\ell$.

 A {\bf Berge path} of length $\ell$ in a hypergraph in a hypergraph is a set of $\ell+1$ vertices $\{v_1, \ldots, v_{\ell+1}\}$ and $\ell$ hyperedges $\{e_1, \ldots, e_{\ell}\}$ such that $\{ v_{i}, v_{i+1} \}\subseteq   e_i$ for all $1\leq i\leq \ell$.
\end{definition}

Let $\cH$ be a hypergraph  and  $p$ be an integer. The {\em $p$-shadow}, $\partial_p \cH$, is the collection of the $p$-sets that lie in some edge of $\cH$. In particular, we will often consider the 2-shadow $\partial_2 \cH$ of a $r$-uniform hypergraph $\cH$ in which each edge of $\cH$ yields a clique on $r$ vertices. 


\section{Background}

Erd\H{o}s and Gallai~\cite{ErdGal59} proved the following result on the Tur\'an number of paths.

\begin{thm}[Erd\H{o}s and Gallai~\cite{ErdGal59}]\label{teg1} Let $k\geq 2$ and let $G$ be an $n$-vertex graph with no path on $k$ vertices. Then $e(G) \leq (k-2)n/2$. 
\end{thm}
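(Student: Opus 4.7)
The plan is to proceed by induction on $n$. For the base case $n \leq k-1$, the trivial bound $e(G) \leq \binom{n}{2} = \frac{n(n-1)}{2} \leq \frac{(k-2)n}{2}$ (since $n-1 \leq k-2$) suffices.

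For the inductive step $n \geq k$, I would first handle the disconnected case: writing $G$ as a disjoint union of its components, none of which contains a path on $k$ vertices, and applying the inductive hypothesis to each and summing yields the required bound. So I may assume $G$ is connected. Next I split on minimum degree. If some vertex $v$ satisfies $d(v) \leq \frac{k-2}{2}$, I delete $v$ and apply induction to $G - v$ (which still contains no path on $k$ vertices), obtaining
\[
e(G) \leq e(G-v) + d(v) \leq \frac{(k-2)(n-1)}{2} + \frac{k-2}{2} = \frac{(k-2)n}{2}.
\]
Otherwise $\delta(G) \geq \frac{k-1}{2}$, so $2\delta(G) + 1 \geq k$, and here I would invoke the classical lemma (due to Dirac) that every connected graph on $n \geq 2$ vertices with minimum degree $\delta$ contains a path on at least $\min(n, 2\delta + 1)$ vertices. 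Since $n \geq k$, this forces a path on $k$ vertices in $G$, contradicting the hypothesis.

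The hard part is the high-minimum-degree subcase, which rests on the path lemma above. To make the argument self-contained, I would take a longest path $P = v_1 v_2 \ldots v_\ell$ in $G$ and run a standard rotation argument: all neighbors of $v_1$ and $v_\ell$ lie on $V(P)$ by maximality, so if $\ell < 2\delta + 1$ then by pigeonhole on $N(v_1)$ and $N(v_\ell)$ some index $i$ satisfies both $v_1 v_{i+1}, v_i v_\ell \in E(G)$, producing a cycle $C$ through $V(P)$. If additionally $\ell < n$, connectedness yields an edge from $V(C)$ to some vertex outside $V(P)$, which combined with $C$ gives a path strictly longer than $P$, contradicting maximality. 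The whole argument hinges on this last subcase; everything else is a mechanical reduction.
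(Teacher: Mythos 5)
Your proof is correct. Note that the paper does not prove this statement at all -- it is quoted as a classical result of Erd\H{o}s and Gallai with a citation -- so there is no in-paper argument to compare against. Your route (induction on $n$, reducing to the connected case, deleting a vertex of degree at most $(k-2)/2$, and otherwise invoking the Dirac-type bound that a connected graph contains a path on at least $\min(n,2\delta+1)$ vertices, proved by the standard longest-path rotation argument) is a complete and standard proof of the theorem; the only pedantic point worth a sentence in a written-up version is the degenerate case $\ell=2$ in the rotation step, which cannot occur for a longest path in a connected graph on $n\geq 3$ vertices.
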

This theorem is implied by a stronger result for graphs with no long cycles.
\begin{thm}[Erd\H{o}s and Gallai~\cite{ErdGal59}]\label{EGpaths}Let $k\geq 3$ and let $G$ be an $n$-vertex graph with no cycle of length $k$ or longer. Then $e(G) \leq (k-1)(n-1)/2$.
\end{thm}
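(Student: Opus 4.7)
My plan is to prove Theorem \ref{EGpaths} by induction on $n$, splitting the argument according to whether $G$ is $2$-connected. The base cases $n \leq k-1$ are immediate: $e(G) \leq \binom{n}{2} = n(n-1)/2 \leq (k-1)(n-1)/2$. For the inductive step with $n \geq k$, first suppose $G$ has a cut vertex or is disconnected; then I would write $G = G_1 \cup G_2$ with $|V(G_1) \cap V(G_2)| \leq 1$, so that $n_i := |V(G_i)| < n$ and $n_1 + n_2 \leq n + 1$. Each $G_i$ inherits the circumference bound, so the induction hypothesis applied to each piece gives $e(G_i) \leq (k-1)(n_i - 1)/2$, and summing yields the desired bound on $e(G)$.

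The core case is $G$ being $2$-connected with $n \geq k$. The plan is to locate a vertex $v$ of small degree and delete it. Concretely, I would show that some $v \in V(G)$ satisfies $\deg(v) \leq (k-1)/2$: otherwise $\delta(G) \geq k/2$, and a Dirac--Bondy-style circumference lower bound for $2$-connected graphs would give a cycle of length at least $\min(n, 2\delta(G)) \geq k$, contradicting the hypothesis. Given such a $v$, observe that $G - v$ has $n-1$ vertices and still contains no cycle of length $\geq k$, so induction yields $e(G-v) \leq (k-1)(n-2)/2$, and therefore
\[
e(G) \leq e(G-v) + \deg(v) \leq \frac{(k-1)(n-2)}{2} + \frac{k-1}{2} = \frac{(k-1)(n-1)}{2},
\]
which closes the induction.

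The main obstacle is the $2$-connected subcase: one must establish that $2$-connected graphs on at least $k$ vertices with minimum degree $\geq k/2$ contain a cycle of length $\geq k$. I would prove this via a self-contained P\'osa-style rotation-extension argument on a longest path $P = v_0 v_1 \dots v_m$ in $G$, first noting that every neighbor of $v_0$ or $v_m$ must lie on $P$, then using $2$-connectivity together with endpoint rotations to manufacture the required long cycle. A pleasant byproduct of this induction is extremality: equality forces each piece of the block decomposition to attain the bound on $k-1$ vertices, yielding exactly the connected graphs all of whose blocks are copies of $K_{k-1}$ (which requires $(k-2)\mid(n-1)$), mirroring the extremal configuration described in the abstract for the hypergraph generalization.
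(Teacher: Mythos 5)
The paper does not actually prove Theorem~\ref{EGpaths}: it is quoted from Erd\H{o}s--Gallai as background, so there is no internal proof to compare against. Your argument is the classical textbook route: induct on $n$, split at cut vertices, and in the $2$-connected case either delete a vertex of degree at most $(k-1)/2$ or derive a contradiction from Dirac's theorem that a $2$-connected graph has circumference at least $\min(n,2\delta)$. The bookkeeping checks out: $\binom{n}{2}\le (k-1)(n-1)/2$ for $n\le k-1$; the block splitting loses nothing since $(n_1-1)+(n_2-1)\le n-1$; and if every degree exceeds $(k-1)/2$ then, degrees being integers, $\delta\ge k/2$, so $\min(n,2\delta)\ge k$ when $n\ge k$. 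It is worth noting that this is a genuinely different route from the one the paper's own machinery suggests: Theorem~\ref{le:Kopy} (Kopylov), read with $r=2$, handles the $2$-connected case by $t$-disintegration exactly as in Section~\ref{mproof} --- each deleted vertex contributes at most $t\le (k-1)/2$ edges and the residual set $W$ contributes at most $\frac{k-1}{2}(|W|-1)$ --- and that version also yields the extremal characterization with no extra effort. Your route trades that structure theorem for a minimum-degree circumference bound.

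That trade is where the one real gap sits. All the difficulty of the theorem is concentrated in the claim that a $2$-connected graph on $n\ge k$ vertices with $\delta\ge k/2$ contains a cycle of length at least $k$ (Dirac, 1952). This is true, but your justification is a one-sentence sketch, and the sketch as stated does not suffice: taking a longest path $P=x_0\cdots x_m$, noting $N(x_0)\cup N(x_m)\subseteq V(P)$, and running the crossing argument gives \emph{either} a cycle through all of $V(P)$ \emph{or} the inequality $\deg(x_0)+\deg(x_m)\le |V(P)|-1$, i.e.\ only a long \emph{path} in the second branch. Converting that long path into a long cycle is precisely the nontrivial use of $2$-connectivity (a fan/bridge argument attaching the path to a longest cycle, or a more careful choice among longest paths), and it is not supplied. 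Either cite Dirac's theorem as a known result or write that argument out in full; as it stands the inductive shell is correct but its engine is missing. A smaller point: your closing claim that equality forces every block to be $K_{k-1}$ requires tracking strictness in the vertex-deletion step, which you have not done.
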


Gy\H{o}ri, Katona, and Lemons~\cite{GKL} extended Theorem~\ref{teg1} to Berge paths in $r$-graphs. The bounds depend on the relationship of $r$ and $k$.
\begin{thm}[Gy\H{o}ri, Katona, and Lemons~\cite{GKL}]\label{paths}
Suppose that $\mathcal H$ is an  $n$-vertex $r$-graph with no Berge path of length $k$. If $k \geq r+2 \geq 5$, then $e(\mathcal H) \leq \frac{n}{k}{k \choose r}$, and if $r \geq k \geq 3$, then $e(\mathcal H) \leq \frac{n(k-1)}{r+1}$. 
\end{thm}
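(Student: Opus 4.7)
My plan is to prove each regime of Theorem~\ref{paths} separately, in each case reducing to a connected $r$-graph and analysing a longest Berge path.

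\textbf{Case $k \geq r+2$.} The target bound $\frac{n}{k}\binom{k}{r}$ is matched by disjoint copies of $K_k^{(r)}$, so I would first reduce to the connected case. The monotonicity $\binom{m}{r}/m \leq \binom{k}{r}/k$ for $r \leq m \leq k$ lets me peel off any component with at most $k$ vertices, so I may assume $\cH$ is connected with $n \geq k+1$. Then take a longest Berge path $P = v_0 e_1 v_1 \cdots e_t v_t$ with $t \leq k-1$ and base vertex set $V(P)$ of size $t+1 \leq k$. Since $P$ is longest, any hyperedge $f \neq e_1$ containing $v_0$ must satisfy $f \subseteq V(P)$, for otherwise using $f$ to attach a vertex $u \in f \setminus V(P)$ to $v_0$ would produce a Berge path of length $t+1$; the symmetric statement holds at $v_t$. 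A Pósa-type rotation argument then produces many alternative endpoints, extending this constraint to many more hyperedges. The remaining edges can be charged to the vertices outside $V(P)$, and a direct count should yield $e(\cH) \leq \frac{n}{k}\binom{k}{r}$.

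\textbf{Case $r \geq k \geq 3$.} Here the hyperedges are already at least as large as the forbidden path, and the bound $\frac{n(k-1)}{r+1}$ is matched by disjoint $(r+1)$-vertex groups carrying $k-1$ hyperedges each. Assuming $e(\cH) > \frac{n(k-1)}{r+1}$, the sum of vertex degrees exceeds $\frac{rn(k-1)}{r+1}$. I would argue by greedy construction on the hyperedges: starting from any hyperedge, one iteratively chooses a fresh hyperedge incident to the current endpoint and picks an unused vertex inside it to continue. The size $r \geq k$ of each hyperedge guarantees that an unused vertex is available for the first $k$ steps, while the degree condition guarantees that an unused incident hyperedge is available to each current endpoint, up to the point of producing a Berge path of length $k$.

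The main obstacle I expect to encounter is the counting step in Case~1, specifically bounding the hyperedges that straddle $V(P)$ and its complement. The condition $k \geq r+2$ is essential: it supplies the monotonicity needed in the reduction to the connected case, and it ensures that a hyperedge has enough slack outside the path for the rotation-and-extension argument to work. In Case~2, the subtlety is instead ensuring that the greedy hyperedges remain distinct throughout $k$ extensions, which likely requires first deleting low-degree vertices to boost the effective average degree before running the greedy construction.
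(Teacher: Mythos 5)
First, note that Theorem~\ref{paths} is a quoted result of Gy\H{o}ri, Katona, and Lemons; the paper contains no proof of it. The only related derivation in the paper is Corollary~\ref{corP2}, which recovers the first bound (with the extremal characterization) for $k\ge r+3$ only, by routing through the much harder cycle theorem (Theorem~\ref{mainF}) plus Lemma~\ref{GKL}. Your plan --- a direct longest-Berge-path and rotation argument --- is closer in spirit to the original GKL proof than to anything in this paper, so it must be judged on its own merits, and as written it has genuine gaps in both cases.

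In Case $k\ge r+2$, the reduction to a connected component with more than $k$ vertices is fine (indeed $\binom{m}{r}/m=\frac1r\binom{m-1}{r-1}$ is increasing in $m$), but everything after that is a placeholder. Even the first concrete claim is off: to extend the path at $v_0$ through a hyperedge $f$ you need $f\notin\{e_1,\dots,e_t\}$, not merely $f\ne e_1$, and the rotation step for Berge paths must track which hyperedges of the path are being reused, not just which base vertices. More importantly, the bound $\frac nk\binom kr$ is attained exactly by disjoint copies of $K_k^{(r)}$, so the ``direct count'' you defer to cannot be lossy: you must account exactly for the up to $\binom{|V(P)|}{r}$ hyperedges inside $V(P)$ and for all hyperedges meeting $V(\cH)\setminus V(P)$, and no charging scheme is proposed. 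This counting is the entire content of the theorem, and it is missing.

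In Case $r\ge k$, the proposed repair --- ``first deleting low-degree vertices to boost the effective average degree'' --- fails numerically. The hypothesis $e(\cH)>\frac{(k-1)n}{r+1}$ gives fewer than one hyperedge per vertex (since $k-1<r+1$), while your greedy construction needs each successive endpoint $v_{i-1}$ to lie in at least $i$ hyperedges, i.e.\ roughly minimum degree $k$. Deleting vertices of degree at most $k-1$ one at a time destroys at most $k-1$ hyperedges per deleted vertex, so if the process exhausts the hypergraph you only conclude $e(\cH)\le (k-1)n$, which overshoots the target by a factor of about $r+1$. The extremal configuration (disjoint groups of $r+1$ vertices each carrying $k-1$ hyperedges) shows why: every batch of $k-1$ destroyed hyperedges must be charged to $r+1$ deleted vertices simultaneously, which a per-vertex degree argument cannot deliver; one needs a different inductive scheme that removes a whole cluster (or a single hyperedge together with its private vertices) at a time. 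Separately, your claim that $r\ge k$ guarantees a fresh vertex at every greedy step fails at the last step when $r=k$: the $k$-th hyperedge has $r=k$ vertices and may coincide with the set $\{v_0,\dots,v_{k-1}\}$ already used.
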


Both bounds in Theorem~\ref{paths} are sharp for each $k$ and $r$ for infinitely many $n$.
The remaining case of $k = r + 1$ was settled later  by Davoodi, Gy\H ori, Methuku, and Tompkins~\cite{DGMT}: 
{\em if $\cH$ is an $n$-vertex $r$-graph with $|E(\cH)| > n$, then it contains a Berge path of length at least $r + 1$.}
Furthermore,  Gy\H ori,  Methuku,   Salia,  Tompkins and  Vizer~\cite{GMSTV}  have found a better upper bound on the
number of edges in  $n$-vertex {\em connected} $r$-graphs with no Berge path of length $k$. Their bound is  asymptotically exact when $r$ is fixed and
$k$ and $n$ are sufficiently large.

The goal of this paper is to present a similar result for cycles.

\section{\bf  Main result: Hypergraphs without long Berge cycles}

 Our main result is an analogue of the Erd\H{o}s--Gallai theorem on cycles for $r$-graphs. 



\begin{thm}\label{mainF}Let $r \geq 3$ and $k \geq r+3$, and suppose $\mathcal H$ is an  $n$-vertex $r$-graph with no Berge cycle of length $k$ or longer. Then $e(\mathcal H) \leq \frac{n-1}{k-2}{k-1 \choose r} $. Moreover, equality is achieved if and only if $\partial_2 \cH$ is connected and for every block $D$ of $\partial_2 \cH$, $D = K_{k-1}$ and $\mathcal H[D] = K_{k-1}^{(r)}$.
\end{thm}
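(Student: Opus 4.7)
The plan is to prove the upper bound and the equality characterization by induction on $n$, using a block decomposition of the 2-shadow $\partial_2\cH$ to reduce to the 2-connected case, which is then handled by exhibiting a long Berge cycle directly.

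For the base case $n \leq k-1$, the bound $e(\cH) \leq \binom{n}{r}$ combined with the elementary inequality $(k-2)\binom{n}{r} \leq (n-1)\binom{k-1}{r}$ suffices; by comparing ratios termwise this holds for $r \leq n \leq k-1$ with equality exactly when $n = k-1$, forcing $\cH = K_{k-1}^{(r)}$. For the inductive step, the crucial first reduction uses that since $r \geq 3$, each hyperedge $e$ induces a $K_r$ in $\partial_2\cH$ which is 2-connected, so $e$ lies in a single block of $\partial_2\cH$. Consequently, the blocks $B_1,\dots,B_t$ of $\partial_2\cH$ partition $E(\cH)$: write $\cH_i$ for the subhypergraph with edges inside $V(B_i)$, of order $n_i = |V(B_i)|$. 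Each $\cH_i$ still has no Berge cycle of length $\geq k$. If $\partial_2\cH$ is disconnected or has a cut vertex, every $n_i < n$, so induction applies, and since $\sum_i(n_i - 1) \leq n - 1$ (with equality iff $\partial_2\cH$ is connected), summing $e(\cH_i) \leq \frac{n_i-1}{k-2}\binom{k-1}{r}$ gives the desired bound; extremality forces connectedness and extremality in each block.

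The remaining case is when $\partial_2\cH$ is itself 2-connected. If $n \leq k-1$, the base case applies. The main obstacle is proving that if $\partial_2\cH$ is 2-connected and $n \geq k$, then $\cH$ contains a Berge cycle of length $\geq k$, contradicting the hypothesis (hence no extremal examples arise in this subcase, which matches the claim that every block has only $k-1$ vertices). My plan here is to take a longest Berge cycle $C$ of length $c$, assume $c < k$ for contradiction, and pick $u \in V(\cH)\setminus V(C)$ (possible, since otherwise $c \geq n \geq k$). By 2-connectedness of $\partial_2\cH$ there are two internally disjoint paths from $u$ to $V(C)$ in the shadow, and lifting these to Berge paths using hyperedges outside $E(C)$ allows one to splice the paths into $C$ and form a strictly longer Berge cycle.

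The hard step is making the lifting rigorous: distinct shadow-edges can be witnessed by the same hyperedge, so realizing the two disjoint shadow-paths as two edge-disjoint Berge paths whose hyperedges are also disjoint from $E(C)$ requires a system-of-distinct-representatives/Hall-type argument on the bipartite graph between shadow-edges and available hyperedges. I expect this will be the bulk of the technical work, and will likely need the hypothesis $k \geq r+3$ (which bounds how many shadow-edges a single hyperedge can cover relative to the needed path length). Once the 2-connected case is settled, tracing the induction yields the equality statement: extremality forces $\partial_2\cH$ to be connected and each block to be extremal, which by the base case means every block has exactly $k-1$ vertices and carries the complete hypergraph $K_{k-1}^{(r)}$.
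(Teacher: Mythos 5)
The block decomposition and the base case $n\leq k-1$ are fine and run parallel to what the paper does, but the heart of your argument --- the claim that a $2$-connected $2$-shadow on $n\geq k$ vertices forces a Berge cycle of length at least $k$ --- is false, and no Hall-type lifting argument can repair it. The obstruction is that the length of any Berge cycle is trivially bounded by the number of hyperedges, while a hypergraph can have a $2$-connected shadow on many vertices with very few hyperedges. Concretely, take $r=4$, $k=7$, and the loose cycle with hyperedges $\{1,2,3,4\}$, $\{4,5,6,7\}$, $\{7,8,9,10\}$, $\{10,11,12,13\}$, $\{13,14,15,1\}$ on $n=15$ vertices: consecutive edges share exactly one vertex, the shadow is a cyclic necklace of $K_4$'s and is $2$-connected, $n=15\geq k=7$, yet every Berge cycle has length at most $5<k$ because there are only $5$ hyperedges. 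So the subcase ``$\partial_2\cH$ is $2$-connected and $n\geq k$'' genuinely occurs and cannot be dismissed as contradictory; your induction never bounds the number of edges in such a block, and the proof collapses there. (Such sparse blocks do satisfy the numerical bound, but that is exactly what has to be \emph{proved}, not what follows from a nonexistence claim.)

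This is precisely why the paper takes a different route for large blocks. It first builds a maximum system of distinct representative pairs $(A,\cA)$ and an auxiliary graph $G$ with edge set $A\cup\partial_2(\cH\setminus\cA)$, so that by a Hall-type lemma any cycle of $G$ lifts to a Berge cycle of $\cH$ of the same length; hence $G$ has no cycle of length $\geq k$. It then applies Kopylov's structure theorem to each $2$-connected block of $G$ (possibly with $n_i\geq k$ vertices) and charges, via a disintegration process, at most $C_r(k)=\binom{k-1}{r}/(k-2)$ units of $|A_i|+|\cB_i|$ to each deleted vertex (Lemma~\ref{lem:upbnd}), finishing with a Kruskal--Katona-type count (Lemma~\ref{lem:shadow}) on the remaining core. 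If you want to salvage your outline, you would need to replace your $2$-connected step with an argument of this type that \emph{counts} edges in a large block rather than excludes it; the counting, not the cycle-splicing, is where the hypothesis $k\geq r+3$ is actually used.
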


\begin{center}
\includegraphics[scale=.4]{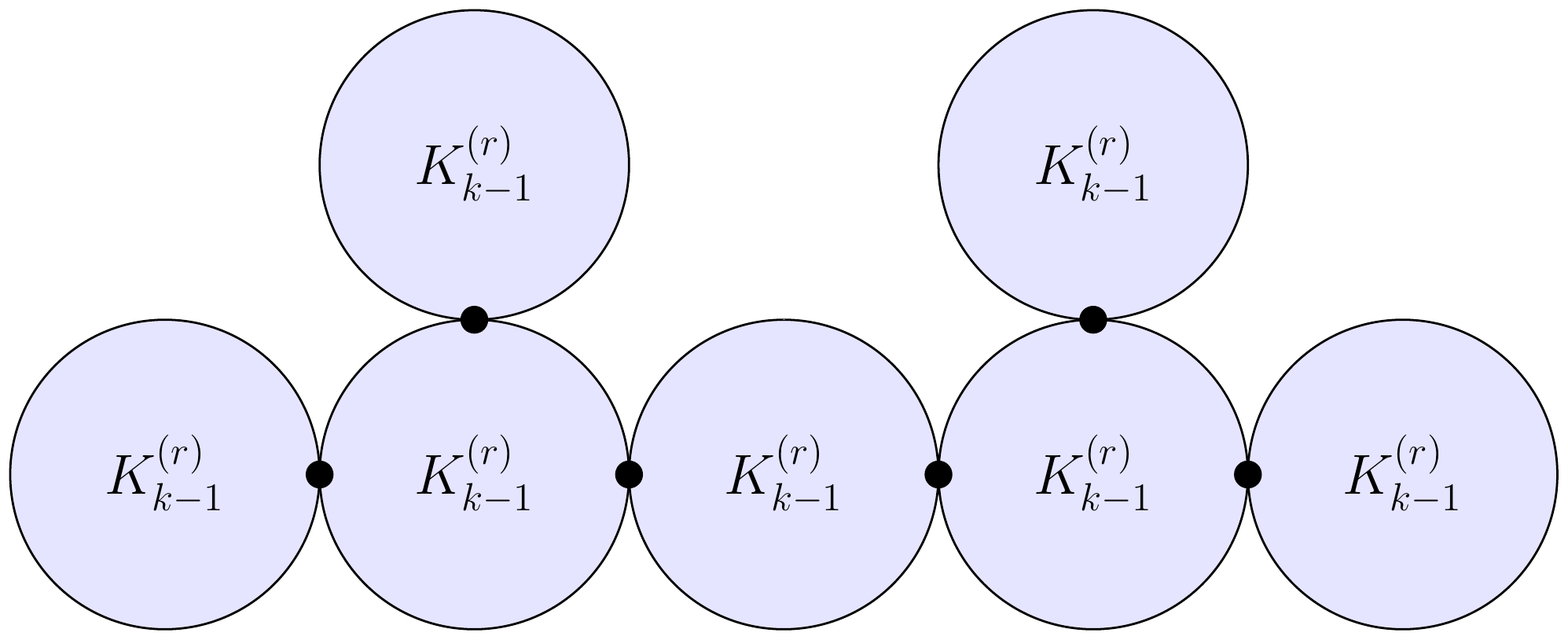}
\end{center}

Note that a Berge cycle can only be contained in the vertices of a single block of the 2-shadow. Hence the aforementioned sharpness examples cannot contain Berge cycles of length $k$ or longer. 


\begin{conj}\label{conjmain}
The statement of Theorem~\ref{mainF} holds for $k= r+2$, too.
  \end{conj}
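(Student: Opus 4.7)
The plan is to prove Theorem~\ref{mainF} by induction on $n$, using the block-tree decomposition of $\partial_2\cH$. The crucial structural observation is that every hyperedge $e\in\cH$ gives an $r$-clique in $\partial_2\cH$, and an $r$-clique in any graph lies inside a single block; hence every hyperedge of $\cH$ (and in particular every Berge cycle of $\cH$) is confined to a single block of $\partial_2\cH$.

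If $\partial_2\cH$ has a cut vertex $v$, I would write $V(\cH)=V_1\cup V_2$ with $V_1\cap V_2=\{v\}$, where $V_1\setminus\{v\}$ and $V_2\setminus\{v\}$ are each nonempty unions of components of $\partial_2\cH-v$. By the structural observation $\cH$ splits as $\cH_1\cup\cH_2$ with $\cH_i$ a hypergraph on $V_i$, and since $|V_1|+|V_2|-1=n$ both $|V_i|<n$; applying the induction hypothesis to each side and summing yields
\[|\cH|=|\cH_1|+|\cH_2|\le\frac{(|V_1|-1)+(|V_2|-1)}{k-2}\binom{k-1}{r}=\frac{n-1}{k-2}\binom{k-1}{r}.\]
The disconnected case is handled analogously by summing over components of $\partial_2\cH$ and yields the strictly weaker bound $\frac{n-c}{k-2}\binom{k-1}{r}$, already showing that connectedness of $\partial_2\cH$ is necessary for equality. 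The base cases $n\le k-1$ use only the trivial bound $|\cH|\le\binom{n}{r}$ combined with the elementary inequality $\binom{m}{r}\le\frac{m-1}{k-2}\binom{k-1}{r}$ for $r\le m\le k-1$, which reduces to the monotonicity of $g(m):=\binom{m}{r}/(m-1)$ verified by the direct computation $\frac{g(m+1)}{g(m)}=\frac{(m+1)(m-1)}{m(m+1-r)}\ge 1$.

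The substantive case is $\partial_2\cH$ 2-connected with $n\ge k$. Here I would take a longest Berge cycle $C$ in $\cH$ (necessarily of length $\ell<k$), use the 2-connectivity of $\partial_2\cH$ to pick a vertex $u\notin V(C)$ together with two internally disjoint paths from $u$ to $V(C)$ in $\partial_2\cH$, and apply a P\'osa-style rotation/extension to derive structural constraints on the hyperedges. Theorem~\ref{paths} applied to sub-hypergraphs supplies a complementary bound on the off-cycle contribution, since each off-cycle vertex lies on Berge paths that, by 2-connectivity of the 2-shadow, threaten to close into a Berge $k$-cycle unless few hyperedges support them.

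The main obstacle is this 2-connected case with $n\ge k$: unlike in the graph Erd\H{o}s--Gallai proof, a Berge cycle requires \emph{distinct} hyperedge witnesses, so a dense hyperedge collection with many coincidences on a small vertex set must be converted into a long Berge cycle via a Hall-type selection argument. For the equality characterization, every inequality in the recursion must be tight: $c=1$ forces $\partial_2\cH$ connected; strict monotonicity of $g$ on $r\le m<k-1$ forces every block to have exactly $k-1$ vertices; and equality in the base $m=k-1$ forces each block to be the complete $K_{k-1}$ with $\cH[V(B)]=K_{k-1}^{(r)}$. Finally one must also establish strict inequality in the 2-connected $n\ge k$ case to rule out large blocks, closing the characterization.
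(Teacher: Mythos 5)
The statement you are addressing is a \emph{conjecture}: the paper does not prove the case $k=r+2$, and in fact explicitly identifies where its own method breaks down there. Your proposal never closes that gap. The block decomposition, the confinement of each hyperedge (and hence each Berge cycle) to a single block of $\partial_2\cH$, and the base-case computation $\binom{m}{r}\le\frac{m-1}{k-2}\binom{k-1}{r}$ for $m\le k-1$ are all fine and do carry over to $k=r+2$ (at $m=k-1=r+1$ both sides equal $r+1$). But the entire content of the problem sits in the $2$-connected case with $n_i\ge k$, and there you offer only a list of intentions (``P\'osa-style rotation/extension,'' ``a Hall-type selection argument,'' ``Theorem~\ref{paths} applied to sub-hypergraphs'') and then candidly label this ``the main obstacle.'' An acknowledged obstacle is not a proof.

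More importantly, your sketch does not engage with the specific reason the paper restricts to $k\ge r+3$. The paper's argument for large blocks runs Kopylov's theorem on the (saturated) block of $A\cup B$ and charges each vertex removed during $t$-disintegration with $a+\binom{s-a}{r-1}$, which Lemma~\ref{lem:upbnd} bounds by $C_r(k)=\frac{1}{k-2}\binom{k-1}{r}$. For $k=r+2$ this bound is \emph{false}: $C_r(r+2)=\frac{1}{r}\binom{r+1}{r}=\frac{r+1}{r}$, barely above $1$, while the charge can be as large as $\lfloor(r+1)/2\rfloor$ (take $a=s=t$). So the per-vertex amortization that powers the whole disintegration argument collapses, and any proof of the conjecture must either replace that charging scheme or exploit extra structure showing the worst-case charges cannot all occur. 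Your proposal contains no idea addressing this, so it cannot be completed along the lines you describe without substantial new input.
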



Similarly to the situation with paths, the case of short cycles, $k\leq r+1$,  is different. Exact bounds for $k\leq r-1$ and asymptotic bounds for $k = r$ were found in~\cite{KosLuo}. The answer for $k=r+1$ is not known.

For convenience, below we will use notation
\begin{equation}\label{crk}
C_r(k): = \frac{1}{k-2}{k-1 \choose r}.
\end{equation}

(So $C_2(k)(n-1) = (k-1)(n-1)/2$.) Theorem~\ref{mainF} yields the following implication for paths.

\begin{cor}\label{corP} Let $r \geq 3$ and $n\geq k+1 \geq r+4$.
If $\mathcal H$ is a connected  $n$-vertex $r$-graph with no Berge path of length $k$, then $e(\mathcal H) \leq C_r(k){(n-1)}$. \end{cor}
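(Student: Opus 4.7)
The plan is to deduce Corollary~\ref{corP} from Theorem~\ref{mainF}. The idea is to show that, under the hypotheses of the corollary, $\cH$ contains no Berge cycle of length $k$ or longer; the edge bound $e(\cH) \le C_r(k)(n-1)$ then follows immediately from Theorem~\ref{mainF}, since $k \ge r+3$ and $n \ge k+1 \ge k$.

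The easy direction is that any Berge cycle of length $\ell \ge k+1$ in $\cH$ yields, by deleting a single cycle edge, a Berge path of length $\ell - 1 \ge k$, which is forbidden. So one only needs to rule out a Berge cycle of length exactly $k$. Suppose such a cycle $C$ exists, with vertices $v_1, \dots, v_k$ and distinct edges $e_1, \dots, e_k$ (indices mod $k$, $\{v_i, v_{i+1}\} \subseteq e_i$). Since $n \ge k+1$, some vertex lies outside $V(C)$; and since $\cH$ is connected (so $\partial_2 \cH$ is connected), some hyperedge $f \in \cH$ contains a vertex $u \in V(C)$ and a vertex $v \notin V(C)$.

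If $f \notin \{e_1, \dots, e_k\}$, then, after relabeling so that $u = v_1$, the sequence $v, v_1, v_2, \dots, v_k$ together with edges $f, e_1, \dots, e_{k-1}$ is a Berge path of length $k$, contradicting the hypothesis. If instead $f = e_j$ for some $j$, then $e_j$ already contains both $v_j$ and $v$; one reroutes around the cycle to obtain the Berge path $v, v_j, v_{j-1}, v_{j-2}, \dots, v_{j+1}$ (traversing the cycle backward from $v_j$) together with the $k$ distinct edges $e_j, e_{j-1}, e_{j-2}, \dots, e_{j+1}$, again a Berge path of length $k$ -- contradiction.

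Hence $\cH$ has no Berge cycle of length at least $k$, and Theorem~\ref{mainF} applies to give $e(\cH) \le \tfrac{n-1}{k-2}\binom{k-1}{r}$. I expect the only mildly subtle point to be the rerouting step, when $f$ happens to coincide with a cycle edge $e_j$: one must traverse the cycle in the direction \emph{opposite} to $v_{j+1}$ so that $e_j = f$ serves solely as the ``extension'' edge from $v$ to $V(C)$, leaving every other cycle edge available to be used exactly once along the path.
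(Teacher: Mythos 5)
Your proof is correct and follows essentially the same route as the paper: reduce to showing $\cH$ has no Berge cycle of length at least $k$ and then invoke Theorem~\ref{mainF}, handling a cycle of length exactly $k$ by extending it to a Berge path of length $k$ via connectivity. The paper packages that last step as a separate lemma of Gy\H{o}ri, Katona, and Lemons (Lemma~\ref{GKL}), whose proof is the same rerouting argument you give for the case $f=e_j$.
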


This gives a $\frac{k-2}{k-r}$ times stronger  bound than Theorem~\ref{paths} for connected $r$-graphs for all $r \geq 3$ and $n\geq k+1 \geq r+4$
and not only for sufficiently large $k$ and $n$. In particular, Corollary~\ref{corP} implies the following slight sharpening of
Theorem~\ref{paths} for $k\geq r+3$.

\begin{cor}\label{corP2} Let $r \geq 3$ and $n\geq k \geq r+3$.
If $\mathcal H$ is an  $n$-vertex $r$-graph with no Berge path of length $k$, then $e(\mathcal H)\leq \frac{n}{k}{k \choose r}$
with equality only if every component of $\cH$ is the complete $r$-graph
$K^{(r)}_{k}$.
 \end{cor}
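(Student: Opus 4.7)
The plan is to reduce this statement component-by-component to Corollary~\ref{corP}. Write $\cH$ as the disjoint union of its connected components $C_1,\dots,C_m$, with $n_i:=|V(C_i)|$ and $e_i:=|E(C_i)|$. Every Berge path lies in a single component, so no $C_i$ contains a Berge path of length $k$, and it suffices to establish the per-component inequality
\[
e_i \leq \frac{n_i}{k}\binom{k}{r} \qquad (1\leq i\leq m)
\]
and sum over $i$ using $\sum_i n_i=n$.

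For a component with $n_i\geq k+1$, Corollary~\ref{corP} yields $e_i\leq C_r(k)(n_i-1)$. Using $\binom{k-1}{r}=\frac{k-r}{k}\binom{k}{r}$, I would rewrite
\[
C_r(k)=\frac{1}{k-2}\binom{k-1}{r}=\frac{k-r}{k-2}\cdot\frac{1}{k}\binom{k}{r},
\]
so the per-component inequality becomes $(n_i-1)(k-r)\leq n_i(k-2)$, equivalently $n_i(r-2)\geq r-k$. Since $r\geq 3$ and $k\geq r+3$, the left side is positive while the right is negative, so the bound is \emph{strict}. For a component with $n_i\leq k$, I use the trivial bound $e_i\leq\binom{n_i}{r}$ together with the fact that $\binom{n_i}{r}/n_i=\frac{1}{r}\binom{n_i-1}{r-1}$ is nondecreasing in $n_i$; this gives $e_i\leq\frac{n_i}{k}\binom{k}{r}$, with equality iff $n_i=k$ and $\cH[C_i]=K^{(r)}_k$ (or $n_i<r$, in which case both sides are $0$).

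Summing the per-component bounds yields the desired inequality. For the equality characterization, the strict inequality in the first case forces every component to satisfy $n_i\leq k$; the small-component analysis then forces every component of size at least $r$ to be a copy of $K^{(r)}_k$ on $k$ vertices, while a component of size smaller than $r$ would contribute $0$ to $e(\cH)$ but $\frac{n_i}{k}\binom{k}{r}>0$ to the bound, producing a strict deficit. Hence equality holds iff every component of $\cH$ is $K^{(r)}_k$. No serious obstacle is anticipated; the only real content is the binomial identity that converts the connected bound of Corollary~\ref{corP} into the path-free bound with the correct constant, and this is precisely where the connectedness-based improvement over Theorem~\ref{paths} is exploited to rule out large components in the extremal configuration.
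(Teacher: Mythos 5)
Your proposal is correct and follows essentially the same route as the paper: decompose into connected components, apply Corollary~\ref{corP} to components on at least $k+1$ vertices (where the inequality is strict), and use the trivial bound $e_i\leq\binom{n_i}{r}$ on the rest, with equality forcing $n_i=k$ and $\cH_i=K^{(r)}_k$. You additionally spell out the binomial arithmetic $C_r(k)(n_i-1)<\frac{n_i}{k}\binom{k}{r}$ that the paper leaves implicit; just note that your parenthetical ``both sides are $0$'' for $n_i<r$ applies to $e_i\leq\binom{n_i}{r}$ but not to the target bound $\frac{n_i}{k}\binom{k}{r}>0$ --- a point your final paragraph already handles correctly.
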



 In the next section, we introduce the notion of {\em representative pairs} and use it to derive useful properties of Berge $F$-free hypergraphs for rather general $F$. In Section~\ref{Kop}, we cite Kopylov's Theorem and prove two useful inequalities.
 In Section~\ref{mproof} we prove our main result, Theorem~\ref{mainF}, and in the final Section~\ref{corpa} we derive Corollaries~\ref{corP} and~\ref{corP2}.

\section{\bf  Representative pairs, the structure of Berge $F$-free hypergraphs}

\begin{definition}
For a hypergraph $\mathcal H$, a {\bf system of distinct representative pairs (SDRP) of $\mathcal H$} is a set of distinct pairs $A = \{\{x_1, y_1\}, \ldots, \{x_s, y_s\}\}$ and a set of distinct hyperedges 
$\cA = \{f_1, \ldots f_s\}$ of $\mathcal H$ such that for all $1\leq i \leq s$

${}$\quad ---  $\{x_i, y_i\} \subseteq f_i$, and\\
${}$\quad ---  $\{x_i, y_i\}$ is not contained in any $f \in \mathcal H - \{f_1, \ldots, f_s\}$.


\end{definition}

 \begin{lem}\label{le:maxA} Let $\mathcal H$ be a 
 hypergraph, let $(A, \cA)$ 
be an \SDRP of $\mathcal H$ of maximum size. 
Let $\cB:= \cH \setminus \cA$ and
let ${B} = \partial_2 \cB$ be the $2$-shadow of ${\mathcal B}$.
For a subset $S \subseteq B$, let ${\mathcal{B}_S}$ denote the set of hyperedges that contain at least one edge of $S$. Then for all nonempty $S \subseteq B$, $|S|< |{\mathcal B}_S|$.
 \end{lem}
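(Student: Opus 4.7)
The plan is a contradiction argument: assume some nonempty $S\subseteq B$ satisfies $|S|\geq|\mathcal{B}_S|$, and use this to construct an SDRP strictly larger than $(A,\cA)$. The extension I have in mind will pair each $p\in S$ with a distinct $\cB$-hyperedge containing $p$, via a perfect matching in the natural bipartite graph $G$ on $S\cup\mathcal{B}_S$ in which $p\sim f$ iff $p\subseteq f$. So I need to check two things: (i) such a perfect matching exists, and (ii) appending it to $(A,\cA)$ yields a legal SDRP.

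For (i), I would choose $S$ to be minimum with $|S|\geq|\mathcal{B}_S|$. Minimality already forces $|S|=|\mathcal{B}_S|$: otherwise $|S|\geq|\mathcal{B}_S|+1\geq 2$, and deleting any single pair from $S$ yields a strictly smaller nonempty counterexample. Minimality further gives $|T|<|\mathcal{B}_T|$ for every nonempty proper $T\subsetneq S$, so $|N_G(T)|=|\mathcal{B}_T|\geq |T|+1$. Together with the boundary case $|N_G(S)|=|\mathcal{B}_S|=|S|$, Hall's condition holds on every subset of $S$, and Hall's theorem produces a matching $\{(p_i,f_i)\}$ saturating $S$, which is perfect between $S$ and $\mathcal{B}_S$ since the two sides have equal size.

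For (ii), I would check that $(A\cup S,\,\cA\cup\mathcal{B}_S)$ together with this matching is a valid SDRP. The pair sets are disjoint because any $\{x_i,y_i\}\in A$ is forbidden from every hyperedge of $\cB$, hence cannot lie in $B\supseteq S$; the hyperedge sets are disjoint since $\mathcal{B}_S\subseteq\cB=\cH\setminus\cA$. The forbidden-containment clause for the old pairs is preserved, since the forbidden set shrinks. For a new pair $p_i$, the forbidden set becomes $\cH\setminus(\cA\cup\mathcal{B}_S)=\cB\setminus\mathcal{B}_S$, and every $\cB$-hyperedge through $p_i$ lies in $\mathcal{B}_{\{p_i\}}\subseteq\mathcal{B}_S$ by definition, so $p_i$ appears in no forbidden hyperedge. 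The enlarged SDRP has size $|A|+|S|>|A|$, contradicting maximality.

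The main subtlety is manufacturing a strict Hall inequality on every proper subset of $S$ while only assuming the non-strict inequality $|S|\geq|\mathcal{B}_S|$; the minimum-counterexample trick is exactly what supplies this, and it is also the reason the lemma's conclusion must be the strict $|S|<|\mathcal{B}_S|$ rather than $\leq$.
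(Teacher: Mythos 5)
Your proof is correct and follows essentially the same route as the paper's: take a minimum counterexample $S$, show minimality forces $|S|=|\mathcal{B}_S|$ and strict Hall inequalities on proper subsets, apply Hall's theorem to get a bijection onto $\mathcal{B}_S$, and extend the SDRP to contradict maximality. Your explicit check that the extended system really satisfies the SDRP conditions is a detail the paper leaves implicit, but the argument is the same.
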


 \begin{proof}Suppose for contradiction there exists a nonempty set $S \subseteq B$ such that $|S| \geq  |{\mathcal B}_S|$. Choose a smallest such $S$.

We claim that $|S|=|\cB_S|$. Indeed, if $|S| > |{\mathcal B}_S|$  then $|S|\geq 2$ because $\cB_S\neq \emptyset$ by definition.
Take any edge $e \in S$. The set $S \setminus e$ is nonempty and $|S\setminus e| = |S|-1\geq |{\mathcal B}_S|\geq  |{\mathcal B}_{S\setminus e}|$, a contradiction to the minimality of $S$.

Consider the case $|S| = |{\mathcal B}_S|$.
By the minimality of $S$, each subset $S' \subset S$ satisfies $|S'|< |\cB_{S'}|$.
Therefore by Hall's theorem,  one can find a bijective mapping of $S$ to ${\mathcal B}_S$, where say the edge $e_i \in S$ gets mapped to hyperedge $f_{i}$ in ${\mathcal B}_S$ for $1\leq j \leq |S|$.
Then $(A \cup \{e_{i}, \ldots, e_{|S|}\}, \mathcal A \cup \{ f_{1}, \ldots, f_{{|S|}}\})$ is a larger $\SDRP$ of $\mathcal H$, a contradiction.
 \end{proof}

\begin{lem}\label{le:BergeFinG}
Let $\mathcal H$ be a 
 hypergraph and let $(A, \cA)$ 
be an \SDRP of $\mathcal H$ of maximum size.
Let $\cB:= \cH \setminus \cA$,
 ${B} = \partial_2 \cB$, and let $G$ be the graph on $V(\cH)$ with edge set $A\cup B$.
If $G$  contains a copy of a graph $F$, then $\mathcal H$ contains a Berge $F$ on the same base vertex set.
\end{lem}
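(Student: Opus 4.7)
The plan is to exhibit, for each edge of the copy of $F$ sitting in $G$, a distinct hyperedge of $\cH$ containing that edge, which is precisely what is needed for a Berge $F$ on the same base vertex set. Since $E(G) = A \cup B$, I would split the edges of the embedded $F$ as $E(F) = F_A \cup F_B$, where $F_A := E(F) \cap A$ and $F_B := E(F) \cap B$, and handle the two parts separately.

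For each pair $\{x_i,y_i\} \in F_A$, the SDRP already provides a designated hyperedge $f_i \in \cA$ containing it, and distinctness of the $f_i$'s is built into the definition of SDRP. So $F_A$ is taken care of without any extra work.

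For $F_B$, I would invoke Hall's theorem on the bipartite "containment" graph whose parts are $F_B$ and $\cB$, with $e \in F_B$ adjacent to $f \in \cB$ precisely when $e \subseteq f$. By definition of the $2$-shadow, every $e \in F_B \subseteq B$ is indeed contained in at least one hyperedge of $\cB$. For any nonempty $S \subseteq F_B$, the neighborhood of $S$ in this bipartite graph is exactly the set $\cB_S$ from Lemma~\ref{le:maxA}, and that lemma gives $|\cB_S| > |S|$. Hence Hall's condition is satisfied (with room to spare), and we obtain an injective assignment $e \mapsto g_e \in \cB$ with $e \subseteq g_e$ for every $e \in F_B$.

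Combining the two assignments is immediate: the hyperedges chosen for $F_A$ all lie in $\cA$, whereas the hyperedges chosen for $F_B$ all lie in $\cB = \cH \setminus \cA$, so the full collection of $|E(F)|$ hyperedges is automatically distinct. Each is a hyperedge of $\cH$ containing its corresponding edge of $F$, which is exactly a Berge $F$ with the same base vertices as the copy of $F$ in $G$. The only real content of the argument is the Hall step, and the main (though mild) obstacle is recognizing that Lemma~\ref{le:maxA} applies to every nonempty subset of $B$ and therefore to every nonempty subset of $F_B$; the strict inequality $|S| < |\cB_S|$ is in fact more than Hall requires, so no additional care is needed.
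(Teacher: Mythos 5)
Your proposal is correct and follows essentially the same route as the paper: split the edges of the embedded $F$ into those in $A$ (matched to their SDRP hyperedges in $\cA$) and those in $B$ (matched into $\cB$ via Hall's theorem using Lemma~\ref{le:maxA}), with distinctness following from $\cA\cap\cB=\emptyset$. No gaps.
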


\begin{proof}
Let $\{v_1, \ldots, v_p\}$ and $\{e_1, \ldots, e_q\}$ be a set of vertices and a set of edges forming a copy of $F$ in $G$ such that the edges $e_1, \dots , e_b$ belong to $B$.
By Lemma~\ref{le:maxA}, each subset $S$ of $\{e_1, \ldots, e_b\}$ satisfies $|S| < |{\mathcal B}_S|$. So we may apply Hall's Theorem to match each of these $e_i$'s to a hyperedge $f_i \in {\mathcal B}$.
The edges $e_i\in A$ can be matched to distinct edges of $\cA$ given by the SDRP.
Since $\cA\cap \cB=\emptyset$ this yields a Berge $F$ in ${\mathcal H}$ on the same base vertex set.
\end{proof}

We have $|\mathcal H| = |A| + |{\mathcal B}|$. Note that the number of $r$-edges in $\cB$ is at most the number of copies of $K_r$ in its 2-shadow.  Therefore Lemma~\ref{le:BergeFinG} gives a new proof for the following result of Gerbner and Palmer (cited in~\cite{GMV}):
for any graph $F$,
\[\ex(n,K_r, F) \leq \ex_r(n, \text{Berge } F) \leq \ex(n, F)+ \ex(n,K_r, F) .
\]

Here $\ex_r(n,\{ \cF_1, \cF_2, \dots \})$ denotes the {\em Tur\'an number of $\{ \cF_1, \cF_2, \dots \}$}, the maximum number of edges in an $r$-uniform hypergraph on $n$ vertices that does not contain a copy of any $\cF_i$.

The  {\em generalized Tur\'{a}n function} $\ex(n,K_r,F)$ is the maximum number of  copies of $K_r$ in an $F$-free graph on $n$ vertices.

\section{\bf  Kopylov's Theorem and two inequalities}\label{Kop}

{\bf Definition}: For a natural number $\alpha$ and a graph $G$, the \emph{$\alpha$-disintegration} of
a graph $G$ is the  process of iteratively removing from $G$ the vertices with degree
at most $\alpha$  until the resulting graph has minimum degree at least $\alpha + 1$ or is empty.
This resulting subgraph $H(G, \alpha)$ will be called the $(\alpha+1)$-{\em core} of $G$. It is well known (and easy)
that $H(G, \alpha)$ is unique and does not depend on the order of vertex deletion.


The following theorem is a consequence of Kopylov~\cite{Kopy} about the structure of graphs without long cycles. We state it in the form that we need.\footnote{A proof and a recent application can be found in~\cite{luo}.}

\begin{thm}[Kopylov~\cite{Kopy}]\label{le:Kopy}
 Let $n \geq k \geq 5$ and let $t = \lfloor \frac{k-1}{2}\rfloor$.
Suppose that $G$ is a $2$-connected $n$-vertex graph with no  cycle of length at least $k$.
Suppose that it is saturated, i.e., for every nonedge $xy$ the graph $G \cup \{ xy\}$ has a cycle
 of length at least $k$. Then either \\
${}$\quad {\rm (\ref{le:Kopy}.1)} \enskip the  $t$-core $H(G, t)$ is empty, the graph $G$ is $t$-disintegrable; or \\
${}$\quad {\rm (\ref{le:Kopy}.2)} \enskip $|H(G, t)|=s$ for some
$t+2\leq s\leq k-2$, it is a complete graph on $s$ vertices, and $H(G,t)= H(G, k-s)$, i.e.,
the rest of the vertices can be removed by a $(k-s)$-disintegration.
\end{thm}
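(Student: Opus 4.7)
The plan is to analyze the $(t+1)$-core $H := H(G,t)$ of $G$ directly, using 2-connectivity of $G$ and the saturation hypothesis to force $H$ to be the desired clique. If $H = \emptyset$ then $G$ is $t$-disintegrable and we are in conclusion~(\ref{le:Kopy}.1); nothing further is required. Otherwise set $s = |V(H)|$; since every vertex of $H$ has degree at least $t+1$ within $H$, we immediately get $s \geq t + 2$.

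The crux is to show that $H$ induces a complete graph. Suppose for contradiction that $u, v \in V(H)$ are non-adjacent. By saturation, $G + uv$ contains a cycle of length at least $k$; equivalently, by the standard closure reformulation, $G$ contains a $u$-$v$ path $P$ of length at least $k - 1$. I would fix a longest such $P$, and combine the fact that $u$ and $v$ each have at least $t+1$ neighbors in $H$ with a P\'osa-type endpoint-rotation argument along $P$: the key numerical fact $2(t+1) \geq k$ ensures that after enough rotations, some pair of endpoints of a length-$(\geq k-1)$ path becomes adjacent in $G$, closing into a cycle of length $\geq k$ and contradicting the hypothesis. Hence $H$ induces the complete graph $K_s$.

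To bound $s \leq k - 2$: if $V(G) = V(H)$ then $G = K_s$ contains a cycle of length $s = n \geq k$, impossible. Otherwise fix $w \in V(G) \setminus V(H)$, and by 2-connectivity of $G$ (Menger's theorem) take two internally disjoint paths from $w$ to distinct vertices $x_1, x_2 \in V(H)$; concatenating with the length-$(s-1)$ Hamiltonian $x_1$-$x_2$ path inside $K_s$ yields a cycle of length at least $s + 1$, which must be strictly less than $k$, giving $s \leq k - 2$.

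Finally, the equality $H(G, t) = H(G, k-s)$ follows because each vertex of $K_s$ has degree $s - 1 \geq k - s + 1$ in $H$ (using $s \geq t + 2 \geq (k+2)/2$), so $K_s$ survives any $(k-s)$-disintegration; and each outside vertex can be shown, by a careful reordering of the $t$-disintegration process that exploits both the now-established clique structure of $H$ and the saturation hypothesis, to actually have degree at most $k - s$ at the moment of its removal. The main technical obstacle is the P\'osa-rotation step, where the numerical bound $2(t+1) \geq k$ must be leveraged carefully against the endpoint neighborhoods of $P$ to extract a cycle of length $\geq k$.
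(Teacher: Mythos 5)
The paper does not actually prove this statement: it quotes Kopylov's theorem and points to \cite{luo} for a proof, so there is no in-paper argument to compare against. Your outline does follow the standard proof from that literature, and several of your steps are complete and correct: $s\ge t+2$ because every vertex of the nonempty core has degree at least $t+1$ inside it; $s\le k-2$ via the fan version of Menger's theorem applied to a vertex $w\notin V(H)$ together with a Hamiltonian path of $K_s$; and $K_s$ survives $(k-s)$-disintegration because $s\ge t+2$ gives $s-1\ge k-s+1$.

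However, the two steps that carry essentially all of the difficulty are only named, not proved. First, for completeness of the core, the mechanism you describe --- iterated P\'osa rotation ``until some pair of endpoints becomes adjacent'' --- does not work as stated: after a single rotation the new endpoint is an arbitrary vertex of $P$, which need not lie in $H(G,t)$, so you lose the degree bound $\ge t+1$ that the whole argument rests on. What is actually needed is Kopylov's path--cycle lemma (a $2$-connected graph containing an $x,y$-path with $\ell$ edges has a cycle of length at least $\min\{\ell+1,\, d(x)+d(y)\}$), applied with $\ell\ge k-1$ and $d(u)+d(v)\ge 2(t+1)\ge k$; its proof is a crossing-neighbors argument on the \emph{fixed} path $P$ plus a separate $2$-connectivity argument to handle neighbors of $u$ and $v$ lying off $P$ (which is a genuine issue here, since the $t+1$ guaranteed neighbors of $u,v$ lie in $H$, not necessarily on $P$). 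Second, the assertion that every vertex outside $K_s$ can be removed by $(k-s)$-disintegration is the most delicate part of the theorem, and ``a careful reordering of the $t$-disintegration process \dots\ can be shown'' is not an argument; the known proof supposes the $(k-s+1)$-core properly contains $K_s$, takes a longest path with both ends in $K_s$ and all internal vertices outside it, and uses the degree bound $k-s+1$ to extract a cycle of length at least $k$. Until these two steps are actually carried out, the proposal is an accurate roadmap of the standard proof rather than a proof.
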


Note that in the second case $2\leq k-s \leq t$.

\begin{lem}\label{lem:upbnd}
Let $k,r,t,s,a$ nonnegative integers, and suppose $k\geq r+3\geq 6$, $t=\lfloor (k-1)/2\rfloor$, and
  $0\leq a \leq s\leq t$. Then
\[   a+ \binom{s-a}{r-1} \leq \frac{1}{k-2}\binom{k-1}{r}:= C_r(k).
\]
   \end{lem}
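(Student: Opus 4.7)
The right-hand side is independent of $a$ and $s$, so my plan is to maximize the left-hand side over $a$ (via convexity) and then over $s$ (via monotonicity), reducing the claim to a purely numerical inequality that depends only on $k$ and $r$, which I will verify by comparing falling-factorial expansions.

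\emph{Step 1: reduce to the endpoints.} Set $\varphi(a):=a+\binom{s-a}{r-1}$. Its forward difference equals
\[ \varphi(a+1)-\varphi(a)=1-\binom{s-a-1}{r-2}, \]
which is nondecreasing in $a$ (since $\binom{s-a-1}{r-2}$ is nonincreasing in $a$). Hence $\varphi$ is discretely convex on $\{0,1,\dots,s\}$ and attains its maximum at an endpoint, giving
\[ a+\binom{s-a}{r-1}\le\max\!\Bigl(s,\;\binom{s}{r-1}\Bigr). \]

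\emph{Step 2: push $s$ up to $t$.} Both $s$ and $\binom{s}{r-1}$ are nondecreasing in $s\in\{0,1,\dots,t\}$, so it suffices to show the two numerical inequalities
\[ t\le C_r(k)\qquad\text{and}\qquad \binom{t}{r-1}\le C_r(k). \]

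\emph{Step 3: verify the two inequalities.} The first is $t(k-2)\le\binom{k-1}{r}$; using $t\le(k-1)/2$, it reduces to $\binom{k-1}{2}\le\binom{k-1}{r}$, which follows from the unimodality and symmetry of $\binom{k-1}{i}$ together with the hypothesis $k\ge r+3$ (which gives $2\le r\le k-3$). For the second, the case $t<r-1$ is trivial; otherwise I expand and cancel one factor of $k-2$ to obtain
\[ \frac{\binom{k-1}{r}}{(k-2)\binom{t}{r-1}}=\frac{1}{r}\cdot\frac{k-1}{t}\prod_{i=1}^{r-2}\frac{k-2-i}{t-i}, \]
and then use $k-1\ge 2t$ to bound each of the $r-1$ paired factors from below by $2$, so the whole expression is $\ge 2^{r-1}/r\ge 1$ for $r\ge 3$.

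The main obstacle is the bookkeeping in Step 3: one has to identify which factor of $\binom{k-1}{r}$ is absorbed by $(k-2)$, then pair the remaining $r-1$ numerator factors with the $r-1$ denominator factors of $\binom{t}{r-1}$ so that the uniform estimate $k-1\ge 2t$ applies to each pair. Once the ratio is arranged in the displayed form, the elementary bound $2^{r-1}\ge r$ (valid for all $r\ge 1$, strict for $r\ge 3$) closes the argument.
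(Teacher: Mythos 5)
Your proof is correct and follows essentially the same route as the paper: discrete convexity in $a$ to reduce to $\max\bigl(t,\binom{t}{r-1}\bigr)$, then the two numerical inequalities, the second of which the paper also settles by writing the ratio as a product of $r-1$ factors each at least $2$ and invoking $2^{r-1}\ge r$. Your version just supplies more of the bookkeeping (the explicit forward difference and the reduction of $t\le C_r(k)$ to $\binom{k-1}{2}\le\binom{k-1}{r}$) that the paper leaves implicit.
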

This is the part of the proof where we use $k\geq r+3$ because this inequality does not hold for $k=r+2$ (then the right hand side is $(r+1)/r$ while the left hand side could be as large as $\lfloor (r+1)/2\rfloor$).

\begin{proof}
Keeping $k,r,t,s$ fixed the left hand side is a convex function of $a$
(defined on the integers $0\leq a \leq s$).
It takes its maximum either at $a=s$ or $a=0$.
So the left hand side is at most $\max\{  s, \binom{s}{r-1}\}$.
This is at most $\max\{ t, \binom{t}{r-1}\}$.
We have eliminated the variables $a$ and $s$.

We claim that $t\leq  \frac{1}{k-2}\binom{k-1}{r}$.
Indeed, keeping $k,t$ fixed, the right hand side is minimized when $r=k-3$, and then it equals to
  $(k-1)/2$. This is at least $\lfloor (k-1)/2\rfloor =t$.

Finally, we claim that
  $\binom{t}{r-1} \leq \frac{1}{k-2}\binom{k-1}{r}$. If $t< r-1$, then there is nothing to prove.
For $t\geq r-1$ rearranging the inequality we get
\[    r \leq \frac{k-1}{t}\times \frac{k-3}{t-1}\times \dots \times \frac{k-r}{t-r+2}.
\]
Each fraction on the right hand side  is at least $2$. Since  $r<2^{r-1}$,  we are done.
 \end{proof}

\begin{lem}\label{lem:shadow}
Let $w, \, r\geq 2$ and let $\cH$ be a $w$-vertex $r$-graph.
Let $\overline{\partial_2 \cH}$ denote the family of pairs of $V(\cH)$ not contained in any member of $\cH$ (i.e., the complement of the $2$-shadow).
Then
\[
     |\cH| +    | \overline{\partial_2 \cH}| \leq
  a_r(w):=  
\left\{\begin{array}{ll}\dbinom{w}{2}        &\mbox{for  $2\leq w\leq r+2$},\\
   \dbinom{w}{r}   &\mbox{for $r+2\leq w $}. \end{array} \right.
\]
Moreover, for $2\leq w\leq k-1$ one has $a_r(w)\leq (w-1) \binom{k-1}{r}/(k-2)$ with equality if and only if $w = k-1$ and 

${}$\quad ---  $w > r+2$ and $\mathcal H$ is complete, or\\
${}$\quad ---  $w = r+2$ and either $\mathcal H$ or $\overline{\partial_2 \cH}$ is complete.
  \end{lem}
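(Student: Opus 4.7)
The plan is to exploit the fact that $\cF := \cH \cup \overline{\partial_2 \cH}$ is an antichain in the Boolean lattice $2^{V(\cH)}$: a pair $P \in \overline{\partial_2 \cH}$ is by definition not contained in any edge of $\cH$, and both $\cH$ and $\overline{\partial_2 \cH}$ are uniform (of sizes $r$ and $2$), so no internal inclusions occur. Therefore the LYM inequality applies:
\[
\frac{|\cH|}{\binom{w}{r}} + \frac{|\overline{\partial_2 \cH}|}{\binom{w}{2}} \le 1.
\]

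To deduce $|\cH| + |\overline{\partial_2 \cH}| \le a_r(w)$, I would split on the sign of $\binom{w}{r} - \binom{w}{2}$. When $w \ge r+2$, multiplying the LYM inequality by $\binom{w}{r}$ gives $|\cH| + |\overline{\partial_2 \cH}|\,\binom{w}{r}/\binom{w}{2} \le \binom{w}{r}$, and since the factor $\binom{w}{r}/\binom{w}{2} \ge 1$ it may be dropped to yield $a_r(w) = \binom{w}{r}$. Symmetrically, when $w \le r+2$, multiplying by $\binom{w}{2}$ and dropping the factor $\binom{w}{2}/\binom{w}{r} \ge 1$ from the $|\cH|$ term gives $a_r(w) = \binom{w}{2}$ (the case $w < r$ is trivial since $\cH = \emptyset$).

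For the second inequality $a_r(w) \le (w-1)C_r(k)$, I would show that the auxiliary function $g(w) := a_r(w)/(w-1)$ is strictly increasing for $w \ge 2$. On $\{2,\dots,r+2\}$ it equals $w/2$, which is clearly increasing; on $\{r+2,r+3,\dots\}$ it equals $\binom{w}{r}/(w-1)$, and a short computation gives
\[
\frac{g(w+1)}{g(w)} \;=\; \frac{(w+1)(w-1)}{(w-r+1)\,w} \;=\; \frac{w^2-1}{w^2 - (r-1)w},
\]
which exceeds $1$ whenever $(r-1)w > 1$, automatic for $r,w \ge 2$. Both formulas agree at $w = r+2$. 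Hence $g(w) < g(k-1) = \binom{k-1}{r}/(k-2)$ for every $2 \le w < k-1$, with equality exactly at $w = k-1$.

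The delicate part is the structural equality clause at $w = k-1$. If $k-1 > r+2$, the strict inequality $\binom{w}{r} > \binom{w}{2}$ in the multiplication step forces $|\overline{\partial_2 \cH}| = 0$ at equality, after which LYM-tightness forces $\cH = \binom{V}{r}$. The main obstacle lies in the boundary case $w = k-1 = r+2$, where LYM could in principle be tight on a genuinely mixed family. Here I would use the chain interpretation of LYM-equality — every maximal chain in $2^V$ meets $\cF$ in exactly one set — combined with a propagation argument: if $\cH$ contains any edge $e$, then every pair $P \subset e$ lies in $\partial_2\cH$, so all chains through $P$ must already be covered at the $r$-level, forcing every $r$-set containing $P$ into $\cH$; iterating this propagation (pair $\to$ $r$-edge $\to$ new pair $\to\cdots$) saturates $\cH$ to the complete hypergraph. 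The only alternative is $\cH = \emptyset$, equivalently $\overline{\partial_2\cH}$ complete; these are exactly the cases listed in the lemma.
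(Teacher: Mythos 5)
Your proof is correct, and it takes a route that is cosmetically different from, though closely related to, the paper's. The paper proves the first inequality by double-counting incidences between the complement families $\overline{\partial_2\cH}$ and $\overline{\cH}=\binom{V(\cH)}{r}\setminus\cH$: each pair of $\overline{\partial_2\cH}$ lies in $\binom{w-2}{r-2}$ members of $\overline{\cH}$, each member of $\overline{\cH}$ contains at most $\binom{r}{2}$ such pairs, and $\binom{w-2}{r-2}\geq\binom{r}{2}$ for $w\geq r+2$. You instead observe that $\cH\cup\overline{\partial_2\cH}$ is an antichain and apply LYM; note that your key ratio satisfies $\binom{w}{r}/\binom{w}{2}=\binom{w-2}{r-2}/\binom{r}{2}$, so the two arguments hinge on literally the same numerical inequality, just packaged on complementary families. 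What your version buys is a cleaner equality analysis: for $w>r+2$ the strict ratio immediately kills $\overline{\partial_2\cH}$, and for $w=r+2$ the chain-tightness characterization replaces the paper's ad hoc argument (that an $r$-tuple containing both a covered and an uncovered pair contributes strictly fewer than $\binom{r}{2}$ pairs). The monotonicity computation for $g(w)=a_r(w)/(w-1)$ is the ``standard calculation'' the paper omits, and yours is correct, including the agreement of the two formulas at $w=r+2$ and the implicit use of $k\geq r+3$ to get $g(k-1)=C_r(k)$.

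Two small points you should make explicit. First, in the boundary case $w=r+2$ your propagation step shows that $f\in\cH$ whenever $|f\cap e|\geq 2$ for some $e\in\cH$; to conclude that a single edge saturates all of $\binom{V}{r}$ you need the graph on $\binom{V}{r}$ with adjacency ``intersection at least $2$'' to be connected. For $r\geq 4$ any two $r$-subsets of an $(r+2)$-set already meet in $\geq r-2\geq 2$ points, and for $r=3$, $w=5$ one intermediate edge suffices (this graph is the complement of the Petersen graph), so the claim holds, but it is worth a line. Second, the structural ``if and only if'' clause, like the paper's own proof, tacitly needs $r\geq 3$: for $r=2$ one has $|\cH|+|\overline{\partial_2\cH}|=\binom{w}{2}$ identically and your strict inequality $\binom{w}{r}>\binom{w}{2}$ fails; this is a defect of the lemma's stated hypotheses rather than of your argument, since the paper only invokes the lemma for $r\geq 3$.
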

\begin{proof}
The case of $w\geq r+2$  is a corollary of the classical Kruskal-Katona theorem, but one can give a direct proof by a double counting. 
If $\overline{\partial_2 \mathcal H}$ is empty, then $|\mathcal H| = {w \choose r}$ if and only if $\mathcal H = {V(\cH) \choose r}$. Otherwise, let $\overline{\cH}$ denote the $r$-subsets of $V(\cH)$ that are not members of $\cH$,
   $\overline{\cH}= \binom{V(\cH)}{r}\setminus \cH$.
Each pair of $\overline{\partial_2 \cH}$ is contained in $\binom{w-2}{r-2}$ members of $\overline{\cH}$ and each $e\in \overline{\cH}$ contains at most $\binom{r}{2}$ edges of $\overline{\partial_2 \cH}$. We obtain
\[   |\overline{\partial_2 \cH}|\binom{w-2}{r-2}\leq |\overline{\cH}|\binom{r}{2}.
\]
Since $\binom{w-2}{r-2}\geq \binom{r}{r-2}= \binom{r}{2}$,  $|\overline{\partial_2 \cH}|\leq |\overline{\cH}|$ with equality only when $w = r+2$. Furthermore, if $\overline{\partial_2\cH}$ and $\cH$ are both nonempty, then for any $xy \in \overline{\partial_2\cH}$ and $uv \in \partial_2\cH$ (with possibly $x = u$), any $r$-tuple $e$ containing $\{x,y\} \cup \{u,v\}$ is in $\overline{\cH}$ but contributes strictly less than ${r \choose 2}$ edges to $\overline{\partial_2 \cH}$, implying $| \overline{\partial_2 \cH}| < |\overline{\cH}|$. This completes the proof of the case.

The case $w\leq r+1$ is easy, and the calculation showing $a_r(w)\leq C_r(k) (w-1)$ with equality only if $w = k-1$ is standard.
   \end{proof}

\section{\bf  Proof of Theorem~\ref{mainF}, the main upper bound} \label{mproof}

\begin{proof}
Let $\mathcal H$ be an $r$-uniform hypergraph on $n$ vertices with no Berge cycle of length $k$ or longer
 ($k \geq r+3\geq 6$).
Let $(A, \cA)$
be an \SDRP of $\mathcal H$ of maximum size.
Let $\cB:= \cH \setminus \cA$,
 ${B} = \partial_2 \cB$.
By Lemma~\ref{le:BergeFinG} the graph $G$ with edge set $A\cup B$ does not contain a cycle of length $k$ or longer.

Let $V_1, V_2, \dots, V_p$ be the vertex sets of the standard (and unique) decomposition of $G$ into $2$-connected blocks of sizes $n_1, n_2, \dots, n_p$.
Then the graph $A\cup B$ restricted to $V_i$, denoted by $G_i$, is either a $2$-connected graph or a single edge (in the latter case $n_i=2$),
 each edge from $A\cup B$ is contained in a single $G_i$, and $\sum_{i=1}^p (n_i-1)\leq (n-1)$.

This decomposition yields a decomposition of $A=A_1\cup A_2\cup \dots\cup A_p$ and
  $B=B_1\cup B_2\cup \dots\cup B_p$, $A_i\cup B_i=E(G_i)$.
If an edge $e\in B_i$ is contained in $f\in \cB$, then $f\subseteq V_i$ (because $f$ induces a
$2$-connected graph $K_r$ in $B$), so the block-decomposition of $G$ naturally extends to $\cB$,
  $\cB_i:= \{ f\in \cB: f\subseteq V_i\}$ and we have $\cB= \cB_1\cup \dots \cup \cB_p$, and $B_i=\partial_2 \cB_i$.

We claim that for each $i$,
\begin{equation}\label{eq:5}
|A_i|+|\cB_i|\leq C_r(k)(n_i-1),
\end{equation}
and hence \[|\mathcal H| = |A| + |\mathcal B| = \sum_{i=1}^p |A_i| + |\cB_i| \leq \sum_{i=1}^p C_r(k)(n_i-1) \leq C_r(k)(n-1),\] completing the proof. 

To prove~\eqref{eq:5} observe that the case $n_i\leq k-1$ immediately follows from Lemma~\ref{lem:shadow}.
From now on, suppose that $n_i\geq k$.

Consider the graph $G_i$ and, if necessary, add edges to it to make it a saturated graph with no cycle of length $k$ or longer. Let the resulting graph be $G'$. 
Kopylov's Theorem (Theorem~\ref{le:Kopy}) can be applied to $G'$.
If $G$ is $t$-disintegrable, then make $(n_i-k+2)$ disintegration steps and let $W$ be the remaining vertices of $V_i$ ($|W|=k-2$).
For the edges of $A_i$ and $\cB_i$ contained in $W$ we use Lemma~\ref{lem:shadow} to see that
  \[    |A_i[W]|+ |\cB_i[W]|< C_r(k)(|W|-1).
\]
In the $t$-disintegration steps, we iteratively remove vertices with degree at most $t$ until we arrive to $W$. When we remove a vertex $v$ with degree $s \leq t$ from $G'$, $a$ of its incident edges are from $A$, and the remaining $s-a$ incident edges eliminate at most $\binom{s-a}{r-1}$ hyperedges from $\cB_i$ containing $v$. Therefore $v$ contributes at most $a + {s-a \choose r-1} \leq C_r(k)$ (by Lemma~\ref{lem:upbnd}) to $|\cB_i| + |A_i|$. 

It follows that \[|A_i| + |\mathcal B_i| < \left(\sum_{v \in G' - W} C_r(k)\right) + C_r(k)(|W|-1)  = C_r(k)(n_i - 1).\] This completes this case.

Next consider the case  (\ref{le:Kopy}.2), $W:=V(H(G,t))$, $|W|=s\leq k-2$. We proceed as in the previous case,
  making $(n_i-s)$ disintegration steps. Apply  Lemma~\ref{lem:shadow} for  $|A_i[W]|+ |\cB_i[W]|$
 and Lemma~\ref{lem:upbnd} for the $(k-s)$-disintegration steps (where $k-s \leq t$) to get the desired upper bound (with strict inequality).
 
Furthermore, if $e(\mathcal H)  = |A| + |\mathcal B| = C_r(k)(n-1)$, then we have $\sum_{i=1}^p (n_i-1) = n-1$ (so $A \cup B$ is connected) and $|A_i| + |\mathcal B_i| = C_r(k) (n_i-1)$ for each $1 \leq i \leq p$. From the previous proof and Lemma~\ref{lem:upbnd}, we see that this holds if and only if for each $i$, $n_i = k-1$, and either $\mathcal B_i$ or $A_i$ is complete. In particular, this implies that each block of $A \cup B$ is a $K_{k-1}$. We will show that  each $G_i$ corresponds to a block in
in $\mathcal H$ that is $K_{k-1}^{(r)}$ with vertex set $V_i$. 

In the case that $\mathcal B_i$ is complete for all $1 \leq i \leq p$, we are done.
Otherwise, if some $A_i$ is complete (note $r=k-3$ by Lemma~\ref{lem:upbnd}) then there are ${k-1 \choose 2} = {k-1 \choose k-3} = {k-1 \choose r}$ hyperedges in $\mathcal A$ containing $V_i$. If all such hyperedges are contained in $V_i$, again we get $\mathcal H[V_i] = K_{k-1}^{(r)}$. So suppose there exists a $f \in \mathcal A$ which is paired with an edge $xy \in A_i$ in the SDRP, but for some $z \notin V_i$, $\{x,y,z\} \subseteq f$. Then $z$ belongs to another block $G_j$ of $A \cup B$. In $A \cup B$, there exists a path from $x$ to $z$ covering $V_i \cup V_j$ which avoids the edge $xy$. Thus by Lemma~\ref{lem:shadow}, there is a Berge path from $x$ to $z$ with at least $2(k-1) - 1$ base vertices which avoids the hyperedge $f$ (since edge $xy$ was avoided). Adding $f$ to this path yields a Berge cycle of length $2(k-1)-1 > k$, a contradiction. 
\end{proof}

\section{Corollaries for paths}\label{corpa}

In order to be self-contained, we present a short proof of a lemma by Gy\H{o}ri, Katona, and Lemons~\cite{GKL}.
 
\begin{lem}[Gy\H{o}ri, Katona, and Lemons~\cite{GKL}]\label{GKL}
Let $\mathcal H$ be a connected hypergraph with no Berge path of length $k$. If there is a Berge cycle of length $k$ on the vertices $v_1, \ldots, v_{k}$ then these vertices constitute a component of $\mathcal H$. 
\end{lem}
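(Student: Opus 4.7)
The plan is a direct contradiction argument. Assuming the cycle vertex set $\{v_1,\dots,v_k\}$ is not a whole component, connectedness of $\cH$ produces a hyperedge $f\in\cH$ containing some $v_j$ together with a vertex $u\notin\{v_1,\dots,v_k\}$. The goal is to attach $u$ to the cycle so as to produce a Berge path of length $k$ (i.e., $k+1$ distinct vertices and $k$ distinct hyperedges), contradicting the hypothesis on $\cH$.

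I would split on whether $f$ is one of the cycle hyperedges $e_1,\dots,e_k$. If $f\notin\{e_1,\dots,e_k\}$, one simply prepends $u$ to the cycle: the sequence $u,v_j,v_{j+1},\dots,v_{j-1}$ (indices mod $k$) has its first pair $\{u,v_j\}$ covered by $f$ and each subsequent pair $\{v_{j+t},v_{j+t+1}\}$ covered by $e_{j+t}$, using $f$ together with $k-1$ of the cycle edges (dropping $e_{j-1}$). If $f=e_i$ for some $i$, then $e_i$ already contains $u$, $v_i$, and $v_{i+1}$, and the trick is to traverse the cycle \emph{backwards}: the sequence $u,v_i,v_{i-1},v_{i-2},\dots,v_{i+1}$ uses $e_i$ for the new pair $\{u,v_i\}$ and the remaining cycle edges $e_{i-1},e_{i-2},\dots,e_{i+1}$ for the reversed consecutive pairs, so all $k$ cycle hyperedges appear exactly once.

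The mild subtlety is really just the orientation in the second case: a naive forward traversal $u,v_i,v_{i+1},\dots$ would demand $e_i$ simultaneously for $\{u,v_i\}$ and for $\{v_i,v_{i+1}\}$, violating the ``distinct hyperedges'' requirement. Reversing direction sidesteps this, and in both cases we obtain a Berge path of length $k$, giving the required contradiction.
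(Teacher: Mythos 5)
Your proof is correct and takes essentially the same route as the paper's: the paper first handles the case where some cycle hyperedge $e_i$ contains a vertex outside $\{v_1,\dots,v_k\}$ (your $f=e_i$ case, resolved by re-traversing the cycle so that $e_i$ is used only for the new pair), and then uses connectivity to find a fresh hyperedge leaving the cycle (your $f\notin\{e_1,\dots,e_k\}$ case). Both cases yield a Berge path of length $k$, exactly as in the paper, and your explicit attention to the traversal direction is a correct spelling-out of a detail the paper leaves implicit.
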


\begin{proof}

Let $V=\{v_1,\ldots,v_k\}$, $E=\{e_1,\ldots,e_k\}$ form the Berge cycle in $\cH$. If some edge, say $e_1$ contains a vertex $v_0$ outside of $V$, then we have a path with vertex set 
$\{v_0,v_1,\ldots,v_\ell\}$ and edge set $E$. Therefore each $e_i$ is contained in $V$. Suppose $V \neq V(\mathcal H)$. Since $\cH$ is connected,
there exists an edge $e_0\in\cH$ and a vertex $v_{k+1}\notin V$ such that for some $v_i\in V$, say $i=k$, 
$\{v_k,v_{k+1}\}\subseteq e_0$. Then $\{v_1, \ldots, v_k, v_{k+1}\}, \{e_1, \ldots, e_{k-1}, e_0\}$ is a Berge path of length $k$.
\end{proof}

{\em Proof of Corrollary~\ref{corP}.} Suppose $n\geq k+1$ and $\mathcal H$ is a connected  $n$-vertex $r$-graph with  $e(\mathcal H) >C_r(k){(n-1)}$.
Then by Theorem~\ref{mainF}, $\mathcal H$ has a Berge cycle of length $\ell\geq k$.
If  $\ell\geq k+1$, then removing any edge from the cycle yields a Berge path of length at least $k$. If $\ell=k$, then by Lemma~\ref{GKL}, $\mathcal H$ again has
 a Berge path of length $k$.\qed

\medskip
Now Theorem~\ref{mainF} together with Corollary~\ref{corP} directly imply Corollary~\ref{corP2}.

\medskip
{\bf Proof of Corollary~\ref{corP2}:}
Suppose $k \geq r+3 \geq 6$ and
 $\mathcal H$ is
 an $r$-graph. 
Let $\cH_1, \cH_2, \ldots,\cH_s$ be the connected components of $\cH$ and $|V(\cH_i)|=n_i$ for $i=1,\ldots,s$.

If $n_i\leq k-1$, then $|\cH_i|\leq {n_i \choose r} < \frac{n_i}{k}{k \choose r}$. 
If $n_i\geq k+1$, then by  Corollary~\ref{corP},   $|\cH_i| \leq C_r(k)(n_i-1) < \frac{n_i}{k}{k \choose r}$.
Finally, if $n_i= k$, then $|\cH_i|\leq {k \choose r} = \frac{n_i}{k}{k \choose r}$, with equality only if $\cH_i=K^{(r)}_k$.
This proves the corollary.\qed

\vspace{12mm}
{\bf Acknowledgment.} The authors would like to thank Jacques Verstra\"ete for suggesting this problem and for sharing his ideas and methods used in similar problems.

\end{document}